\begin{document}
\newtheorem{thm}{Theorem}
\newtheorem{cor}[thm]{Corollary}
\newtheorem{lem}{Lemma}
\newtheorem{prop}{Proposition}
\theoremstyle{remark}\newtheorem{rem}{Remark}
\theoremstyle{definition}\newtheorem{defn}{Definition}

\title{Variational Inequalities For The Differences Of Averages Over Lacunary Sequences}
\author{Sakin Demir\\
Agri Ibrahim Cecen University\\ 
Faculty of Education\\
Department of Basic Education\\
04100 A\u{g}r{\i}, Turkey\\
E-mail: sakin.demir@gmail.com
}


\maketitle


\renewcommand{\thefootnote}{}

\footnote{2020 \emph{Mathematics Subject Classification}: Primary 26D07, 26D15; Secondary 42B20.}

\footnote{\emph{Key words and phrases}: $A_p$ Weight, Weak Type $(1,1)$, Strong Type $(p,p)$, $H^1$ Space, BMO Space.}

\renewcommand{\thefootnote}{\arabic{footnote}}
\setcounter{footnote}{0}

\begin{abstract} Let $f$ be a locally integrable function defined on $\mathbb{R}$, and let $(n_k)$ be a lacunary sequence. Define the operator $A_{n_k}$ by
$$A_{n_k}f(x)=\frac{1}{n_k}\int_0^{n_k}f(x-t)\, dt.$$
We prove various types of new inequalities for the variation operator
$$\mathcal{V}_sf(x)=\left(\sum_{k=1}^\infty|A_{n_k}f(x)-A_{n_{k-1}}f(x)|^s\right)^{1/s}$$
when  $2\leq s<\infty$.
\end{abstract}

An increasing sequence $(n_k)$ of real numbers is called lacunary if there exists a constant $\beta >1$ such that 
$$\frac{n_{k+1}}{n_k}\geq\beta$$
for all $k=0,1,2,\dots$.\\
\noindent
Let $f$ be a locally integrable function defined on $\mathbb{R}$. Let $(n_k)$ be a lacunary sequence and define the operator $A_{n_k}$ by
$$A_{n_k}f(x)=\frac{1}{n_k}\int_0^{n_k}f(x-t)\, dt.$$
It is clear that
$$A_{n_k}f(x)=\frac{1}{n_k}\chi_{(0, n_k)}\ast f(x)$$
where $\ast$ stands for convolution.\\
Consider the variation operator
$$\mathcal{V}_sf(x)=\left(\sum_{k=1}^\infty|A_{n_k}f(x)-A_{n_{k-1}}f(x)|^s\right)^{1/s}$$
for $2\leq s<\infty$.\\
Analyzing the boundlessness of the variation operator $\mathcal{V}_sf$ is a method of measuring the speed of convergence of the sequence $\{A_{n_k}f\}$.\\
 
Various types of inequalities for the two-sided variation operator
$$\mathcal{V}^{\prime}_sf(x)=\left(\sum_{-\infty}^\infty \left|\frac{1}{2^n}\int_x^{x+2^n}f(t)\, dt-\frac{1}{2^{n-1}}\int_x^{x+2^{n-1}}f(t)\, dt\right|^s\right)^{1/s}$$
when $2\leq s<\infty$ have been proven by the author in  S. Demir~\cite{sdemir}, in this research we prove that same types of inequalities are also true for any lacunary sequence $(n_k)$ for the one-sided variation operator $\mathcal{V}_sf(x)$ for $2\leq s<\infty$.

\begin{lem}\label{laclem}Let $(n_k)$ be a lacunary sequence with the lacunarity constant $\beta$, i.e.,
$$\frac{n_{k+1}}{n_k}\geq\beta >1$$
for all $k=0,1,2,\dots$, and $1\leq s <\infty$. Then there exists  a sequence $(m_j)$ such that
$$\beta^2\geq\frac{m_{j+1}}{m_j}\geq\beta >1$$
for all $j$ and 
$$\left(\sum_{k=1}^\infty|A_{n_k}f(x)-A_{n_{k-1}}f(x)|^s\right)^{1/s}\leq \left(\sum_{j=1}^\infty|A_{m_j}f(x)-A_{m_{j-1}}f(x)|^s\right)^{1/s}.$$
\end{lem}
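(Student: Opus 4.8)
The plan is to take $(m_j)$ to be a refinement of $(n_k)$: keep every $n_k$, and in each ratio-gap $(n_{k-1},n_k)$ with $n_k/n_{k-1}>\beta^{2}$ splice in finitely many intermediate points so that after insertion every consecutive ratio lies in $[\beta,\beta^{2}]$. Concretely, put $L_k=\log(n_k/n_{k-1})$ and $N_k=\lceil L_k/(2\log\beta)\rceil\geq 1$; lacunarity gives $L_k\geq\log\beta$, and one checks that $N_k\log\beta\leq L_k\leq 2N_k\log\beta$, so cutting $[\log n_{k-1},\log n_k]$ into $N_k$ equal pieces produces pieces of length between $\log\beta$ and $2\log\beta$. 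Exponentiating yields points $n_{k-1}=p^{(k)}_{0}<p^{(k)}_{1}<\dots<p^{(k)}_{N_k}=n_k$ with $\beta\leq p^{(k)}_{i}/p^{(k)}_{i-1}\leq\beta^{2}$ (when $n_k/n_{k-1}\leq\beta^{2}$ this degenerates to $N_k=1$, inserting nothing). Listing the $p^{(k)}_{i}$ in increasing order gives $(m_j)$ with $\beta^{2}\geq m_{j+1}/m_j\geq\beta>1$ for all $j$ and $\{n_k\}\subseteq\{m_j\}$.

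Fix $x$ and set $a_j=A_{m_j}f(x)-A_{m_{j-1}}f(x)$ and $J_k=\{\,j\geq1:\ n_{k-1}\leq m_{j-1}<m_j\leq n_k\,\}$; the $J_k$ are pairwise disjoint with union $\{\,j\geq1\,\}$, and telescoping inside each gap gives
$$A_{n_k}f(x)-A_{n_{k-1}}f(x)=\sum_{j\in J_k}a_j\qquad(k\geq1).$$
Hence the asserted inequality is equivalent to
$$\sum_{k\geq1}\Bigl|\sum_{j\in J_k}a_j\Bigr|^{s}\leq\sum_{j\geq1}|a_j|^{s}=\sum_{k\geq1}\sum_{j\in J_k}|a_j|^{s},$$
and it is enough to prove the termwise bound $\bigl|\sum_{j\in J_k}a_j\bigr|^{s}\leq\sum_{j\in J_k}|a_j|^{s}$ for each $k$. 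For $s=1$ this is just the triangle inequality, so the lemma follows at once in that case; and unconditionally, Minkowski's inequality in $\ell^{s}$ applied to the single-coordinate vectors $\bigl(a_j\,\mathbf{1}_{k=k(j)}\bigr)_j$ only yields the cruder $\mathcal{V}_sf(x)\leq\sum_j|a_j|$, i.e.\ domination by the $\ell^{1}$-variation of $(m_j)$.

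The step I expect to be the main obstacle is exactly the termwise bound when $s>1$: it is false for arbitrary reals $a_j$, and a carelessly chosen (e.g.\ geometric) refinement can genuinely make $\sum_{j\in J_k}|a_j|^{s}$ smaller than $\bigl|\sum_{j\in J_k}a_j\bigr|^{s}$ — this already happens when $g(p):=A_pf(x)=\frac{1}{p}\int_0^{p}f(x-t)\,dt$ is monotone across the gap $[n_{k-1},n_k]$. So the interpolating points must be chosen adaptively in terms of $g$: I would place the $p^{(k)}_{i}$ at (or just next to) the successive local extrema of $g$ on $[n_{k-1},n_k]$, so that the consecutive differences $a_j$ within a block alternate in sign and cannot cancel one another, adding only the minimal number of further geometric points needed to restore the window $\beta\leq p^{(k)}_{i}/p^{(k)}_{i-1}\leq\beta^{2}$. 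Showing that these two requirements — the ratio window and $\sum_{j\in J_k}|a_j|^{s}\geq\bigl|\sum_{j\in J_k}a_j\bigr|^{s}$ — can always be met simultaneously, using the continuity of $g$ together with the lacunarity of $(n_k)$, is the real content of the argument; everything else is the bookkeeping indicated above.
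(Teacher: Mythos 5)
Your refinement-plus-telescoping bookkeeping is essentially the paper's own argument (the paper inserts $\beta n_0,\beta^2 n_0,\dots$ gap by gap instead of cutting $\log$-intervals evenly, to the same effect), and the point at which you stop is exactly the point at which the paper's proof is defective: after the triangle inequality the paper has only the $\ell^1$ statement $\sum_k\bigl|\sum_{j\in J(k)}a_j\bigr|\le\sum_j|a_j|$, and it then passes to the $\ell^s$ statement with a bare ``thus'', i.e.\ it tacitly uses precisely the termwise bound $\bigl|\sum_{j\in J_k}a_j\bigr|^s\le\sum_{j\in J_k}|a_j|^s$ that you flag as false for general reals. So your diagnosis of the obstacle is accurate, but your proposal is not a proof: you leave the decisive step (that the ratio window and your adaptive, extrema-chasing placement can be met simultaneously) unestablished.

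That step cannot be established. The window $\beta\le m_{j+1}/m_j\le\beta^2$ forces roughly $\log_\beta(n_k/n_{k-1})$ geometrically spread points into each gap and forbids adding or moving points freely, so you cannot follow the extrema of $g(p)=A_pf(x)$; and even alternating signs would not give the termwise bound (increments $+1,-\epsilon,+1,-\epsilon,\dots$ defeat it). Worse, for $s>1$ the asserted pointwise inequality is itself false, so no choice of $(m_j)$ can rescue the scheme: take $\beta=2$, $s=2$, $n_k=8^k$, $f=\chi_{[0,1]}$, $x=1$, so that $A_pf(1)=\min(p,1)/p$ is nonincreasing with total variation $1$, and for any sequence with $2\le m_{j+1}/m_j\le 4$ each increment is at most $1-m_{j-1}/m_j\le 3/4$; hence $\sum_j|A_{m_j}f(1)-A_{m_{j-1}}f(1)|^2\le \tfrac34$, while $\sum_k|A_{n_k}f(1)-A_{n_{k-1}}f(1)|^2=(1-\tfrac18)^2/(1-\tfrac1{64})=\tfrac79>\tfrac34$. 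The lemma and the paper's proof are sound only for $s=1$; for $s>1$ the monotone-$g$ scenario you worried about is not a flaw in your particular construction but a genuine counterexample to the stated inequality, so the gap you identified lies in the paper's own argument, and no amount of bookkeeping on your side can close it.
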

\begin{proof}
Let us start our construction by first choosing $m_0=n_0$.  If
$$\beta^2\geq\frac{n_1}{n_0}\geq\beta,$$
define $m_1=n_1$.
If 
$$\frac{n_1}{n_0}>\beta^2,$$
let $m_1=\beta n_0$, then we have
$$\beta^2\geq\frac{m_1}{m_0}=\frac{\beta n_0}{n_0}=\beta\geq\beta .$$
Also, 
$$\frac{n_1}{m_1}\geq \frac{\beta^2n_0}{\beta n_0}=\beta .$$
Again, if 
$$\frac{n_1}{m_1}\leq\beta^2,$$
then choose $m_2=n_1$. If this is not the case, choose $m_2=\beta^2  n_0\leq n_1$.\\
By the same calculation as before, $m_0, m_1, m_2$ are part of a lacunary sequence satisfying
$$
\beta^2\geq\frac{m_{k+1}}{m_k}\geq\beta >1.
$$
To continue the sequence, either $m_3=n_1$ if
$$\frac{n_1}{m_2}\leq\beta^2$$
or $m_3=\beta^3n_0$ if 
$$\frac{n_1}{m_2}>\beta^2 .$$

Since $\beta>1$, this process will end at some $k_0$ such that $m_{k_0}=n_1$. The remaining elements $m_k$ are constructed in the same manner as the original $n_k$,  with necessary terms added between two consecutive $n_k$ to obtain the inequality
$$
\beta^2\geq\frac{m_{k+1}}{m_k}\geq\beta >1.
$$
Let now
$$J(k)=\{j:n_{k-1}<m_j\leq n_k\}$$
then we have
$$A_{n_k}f(x)-A_{n_{k-1}}f(x)=\sum_{j\in J(k)}(A_{m_j}f(x)-A_{m_{j-1}}f(x))$$
and thus we get
\begin{align*}
|A_{n_k}f(x)-A_{n_{k-1}}f(x)|&=\bigg|\sum_{j\in J(k)}(A_{m_j}f(x)-A_{m_{j-1}}f(x))\bigg|\\
&\leq \sum_{j\in J(k)}|A_{m_j}f(x)-A_{m_{j-1}}f(x)|.
\end{align*}
This implies that
\begin{align*}
\sum_{k=1}^\infty|A_{n_k}f(x)-A_{n_{k-1}}f(x)|&\leq \sum_{k=1}^\infty\sum_{j\in J(k)}|A_{m_j}f(x)-A_{m_{j-1}}f(x)|.\\
&= \sum_{j=1}^\infty |A_{m_j}f(x)-A_{m_{j-1}}f(x)|.
\end{align*}
Thus we have
$$\left(\sum_{k=1}^\infty|A_{n_k}f(x)-A_{n_{k-1}}f(x)|^s\right)^{1/s}\leq \left(\sum_{j=1}^\infty|A_{m_j}f(x)-A_{m_{j-1}}f(x)|^s\right)^{1/s}.$$
and this completes the proof.
\end{proof}
\begin{rem}\label{lacrem}We know from Lemma~\ref{laclem} that
$$\left(\sum_{k=1}^\infty|A_{n_k}f(x)-A_{n_{k-1}}f(x)|^s\right)^{1/s}\leq \left(\sum_{j=1}^\infty|A_{m_j}f(x)-A_{m_{j-1}}f(x)|^s\right)^{1/s}.$$
and the new sequence $(m_j)$ satisfies 
$$\beta^2\geq\frac{m_{j+1}}{m_j}\geq\beta >1$$
for all $j\in \mathbb{Z}^+$. Therefore,
we can assume without loss of generality,
$$\beta^2\geq\frac{n_{k+1}}{n_k}\geq\beta >1$$
for all $k\in \mathbb{Z}^+$ when we are proving any result for $\mathcal{V}_s(x)$.\\
\noindent
Since
$$\frac{1}{n_k}=\frac{n_1}{n_2}\cdot \frac{n_2}{n_3}\cdot\frac{n_3}{n_4}\cdot\cdots\cdot\frac{n_{k-1}}{n_k}$$
we can also assume that 
$$\frac{1}{n_k}\leq\frac{1}{\beta^{2(k-1)}}$$
for all $k=0,1,2,\dots$.
\end{rem}
\begin{lem}\label{lacseq}Let  $(n_k)$  be a lacunary sequence, and let $\gamma$ denote the smallest positive integer satisfying
$$\frac{1}{\beta}+\frac{1}{\beta^\gamma}\leq 1.$$
 If $i\geq j+\gamma$, $0<y\leq n_j$ and $n_j<x<n_{i+1}$, then
$$\chi_{(y, y+n_k)}(x)-\chi_{(0,n_k)}(x)=0$$
unless $k=i$ in which case
$$\chi_{(y, y+n_k)}(x)-\chi_{(0,n_k)}(x)=\chi_{(n_i,y+n_i)}.$$
\end{lem}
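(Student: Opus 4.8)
The plan is to fix $x$ with $n_i<x<n_{i+1}$ (this is the feature of the hypothesis that actually gets used) and, for each $k$ separately, to read off the two values $\chi_{(0,n_k)}(x)$ and $\chi_{(y,y+n_k)}(x)$ simply by comparing interval endpoints. A useful preliminary observation is that $x>y$: since $i\ge j+\gamma\ge j+1$ and the sequence is strictly increasing, $x>n_i\ge n_{j+1}>n_j\ge y>0$. In particular $x$ always lies to the right of the left endpoint of the translated interval, so $\chi_{(y,y+n_k)}(x)-\chi_{(0,n_k)}(x)$ can be nonzero only when $\chi_{(y,y+n_k)}(x)=1$ and $\chi_{(0,n_k)}(x)=0$.

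I would then split into the three regimes $k\ge i+1$, $k=i$, and $k\le i-1$. For $k\ge i+1$ we have $n_k\ge n_{i+1}>x$, so $x\in(0,n_k)$; and $x<n_k<y+n_k$ together with $x>y$ give $x\in(y,y+n_k)$, so both characteristic functions equal $1$ and the difference vanishes. For $k=i$, the inequality $x>n_i$ gives $\chi_{(0,n_i)}(x)=0$, while $0<y<n_i<x$ shows that $x$ lies in $(y,y+n_i)$ exactly when it lies in $(n_i,y+n_i)$; hence $\chi_{(y,y+n_i)}(x)=\chi_{(n_i,y+n_i)}(x)$, which is the claimed identity.

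The only regime carrying any real content is $k\le i-1$, and here the definition of $\gamma$ is precisely what does the job. Using $j\le i-\gamma$, $k\le i-1$, and lacunarity, $n_j\le n_{i-\gamma}\le\beta^{-\gamma}n_i$ and $n_k\le n_{i-1}\le\beta^{-1}n_i$, so
$$y+n_k\;\le\;n_j+n_k\;\le\;\Bigl(\frac1\beta+\frac1{\beta^\gamma}\Bigr)n_i\;\le\;n_i\;<\;x.$$
Therefore $x\notin(y,y+n_k)$, and since also $x>n_i>n_k$ we get $x\notin(0,n_k)$, so both characteristic functions vanish at $x$. This exhausts the cases. I expect the whole substance of the argument to sit in this last display: $\gamma$ was defined so that $\beta^{-1}+\beta^{-\gamma}\le1$, which via lacunarity forces $n_j+n_{i-1}\le n_i$ and hence pushes the right endpoints of \emph{both} intervals $(0,n_k)$ and $(y,y+n_k)$ strictly below $x$ for every $k<i$; everything else is endpoint bookkeeping (e.g.\ keeping track that $x=n_k$ puts $x$ outside $(0,n_k)$).
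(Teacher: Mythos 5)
Your proof is correct and follows essentially the same route as the paper's: the same three-case split in $k$, with the whole weight carried by the estimate $y+n_k\le n_j+n_{i-1}\le(\beta^{-\gamma}+\beta^{-1})n_i\le n_i$ coming from the definition of $\gamma$ and lacunarity (the paper packages this as $n_j+n_k\le n_{k+1}$ for $k\ge j+\gamma-1$, after normalizing $\beta\le n_{k+1}/n_k\le\beta^2$, a normalization your argument harmlessly avoids). You also correctly read the hypothesis as $n_i<x<n_{i+1}$, which is what the paper's proof actually uses and what Lemma~\ref{drlem} requires; the ``$n_j<x$'' in the statement is evidently a misprint.
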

\begin{proof} Since $(n_k)$ is a lacunary sequence there exists a constant $\beta >1$ such that 
$$\frac{n_{k+1}}{n_k}\geq\beta$$ 
for all $k$.\\
We can assume that

\begin{equation}\label{eq1}
\beta^2\geq\frac{n_{k+1}}{n_k}\geq\beta
\end{equation}
for all $k$ by Remark~\ref{lacrem}.\\
\noindent
Since we have
$$ \frac{n_l}{n_k}=\frac{n_l}{n_{l+1}}\cdot\frac{n_{l+1}}{n_{l+2}}\cdot \dots \cdot\frac{n_{k-1}}{n_k}$$
and
$$\frac{1}{\beta}\leq\frac{n_k}{n_{k+1}}\leq \frac{1}{\beta^{k-l}}$$
for all $k$, we see that 
\begin{equation}\label{eq2}
\frac{1}{\beta^{2(k-l)}}\leq \frac{n_l}{n_k}\leq\frac{1}{\beta^{k-l}}
\end{equation}
for all $k>l$.\\
Let $\gamma$ denote the smallest positive integer satisfying
$$\frac{1}{\beta}+\frac{1}{\beta^\gamma}\leq 1.$$
We see from (\ref{eq2}) that
\begin{equation}\label{eq3}
n_j+n_k\leq n_{k+1}
\end{equation}
for all $k\geq j+\gamma -1$.\\
It is easy to see that for $k>i$,
$$0<y\leq n_j\leq n_i<x<n_{i+1}\leq n_k<y+n_k ,$$
and this implies that
$$\left[\chi_{(y,y+n_k)}(x)-\chi_{(0,n_k)}(x)\right]\cdot \chi_{(n_i,n_{i+1})}(x)=0.$$
For $k\leq i-1$, we see by (\ref{eq3}) that
$$n_k<y+n_k\leq n_j+n_{i-1}\leq n_i.$$
Then we have
$$\chi_{(y,y+n_k)}(x)\cdot \chi_{(n_i,n_{i+1})}(x)=\chi_{(0,n_k)}(x)\cdot \chi_{(n_i, n_{i+1})}=0.$$
Suppose now that $k=i$, by (\ref{eq3}) we have
$$y<n_i<y+n_i\leq n_j+n_i\leq n_{i+1}$$
and this implies that
$$\chi_{(y,y+n_i)}(x)-\chi_{(0,n_i)}(x)=\chi_{(y, y+n_i)}\cdot \chi_{(n_i, n_{i+1})}(x)=\chi_{(n_i, y+n_i)}(x).$$
\end{proof}
Let 
$$\phi_k(x)=\frac{1}{n_k}\chi_{(0, n_k)}(x)$$
and define the kernel operator $K:\mathbb{R}\to {\ell^s({\mathbb{Z}^+})}$ as 
$$K(x)=\{\phi_k(x)-\phi_{k-1}(x)\}_{k\in \mathbb{Z}^+}.$$
It is clear that
\begin{align*}
\mathcal{V}_sf(x)&=\|K\ast f(x)\|_{\ell^s({\mathbb{Z}^+})}\\
&=\left(\sum_{k=1}^\infty |\phi_k\ast f(x)-\phi_{k-1}\ast f(x)|^s\right)^{1/s}\\
&=\left(\sum_{k=1}^\infty|A_{n_k}f(x)-A_{n_{k-1}}f(x)|^s\right)^{1/s}
\end{align*}
where $\ast$ denotes convolution, i.e.,
$$K\ast f(x)=\int K(x-y)\cdot f(y)\, dy.$$
Let $B$ be a Banach space. We say that the $B$-valued  kernel $K$ satisfies $D_r$ condition, for $1\leq r<\infty$, and write $K\in D_r$, if there exists a sequence $\{c_l\}_{l=1}^\infty$ of positive numbers such that $\sum_lc_l<\infty$ and such that
$$\left(\int_{S_l(|y|)}\|K(x-y)-K(x)\|_B^r\, dx\right)^{1/r}\leq c_l|S_l(|y|)|^{-1/{r^\prime}},$$
for all $l\geq 1$ and all $y>0$, where $S_l(|y|)$ denotes the spherical shell $2^l|y|<|x|<2^{l+1}y$ and $\frac{1}{r}+\frac{1}{r^\prime}=1$.\\
When $K\in D_1$ we have the Hörmander condition:
$$\int_{|x|>2|y|}\|K(x-y)-K(x)\|_B\, dx\leq C$$
where $C$ is a positive constant which does not depend on $y>0$.\\
\begin{lem}\label{drlem} Let $\gamma$ denote the smallest positive integer satisfying
$$\frac{1}{\beta}+\frac{1}{\beta^\gamma}\leq 1.$$
and let $1\leq r,s<\infty$, $i\geq j+\gamma$, and $0<y\leq n_j$. Then 
$$\left(\int_{n_i}^{n_{i+1}}\|K(x-y)-K(x)\|^r_{\ell^s(\mathbb{Z}^+)}\, dx\right)^{1/r}\leq  C_in_i^{1/r-1},$$
i.e., $K$ satisfies $D_r$ condition for $1\leq r<\infty$.
\end{lem}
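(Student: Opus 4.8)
The plan is to reduce everything to Lemma~\ref{lacseq}: for $x$ in the annulus $(n_i,n_{i+1})$ that lemma identifies precisely which coordinates of the vector $K(x-y)-K(x)$ fail to vanish, and once those are in hand the estimate is a one-line integration.

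First I would fix $x\in(n_i,n_{i+1})$. Since $i\ge j+\gamma>j$ we have $n_j<n_i<x<n_{i+1}$, so the hypotheses of Lemma~\ref{lacseq} hold. The $k$-th coordinate of $K(x-y)-K(x)$ is
$$\big[\phi_k(x-y)-\phi_k(x)\big]-\big[\phi_{k-1}(x-y)-\phi_{k-1}(x)\big]=\frac{1}{n_k}\big[\chi_{(y,y+n_k)}(x)-\chi_{(0,n_k)}(x)\big]-\frac{1}{n_{k-1}}\big[\chi_{(y,y+n_{k-1})}(x)-\chi_{(0,n_{k-1})}(x)\big].$$
By Lemma~\ref{lacseq} the first difference vanishes unless $k=i$, in which case it equals $\frac{1}{n_i}\chi_{(n_i,y+n_i)}(x)$; likewise the second difference vanishes unless $k-1=i$, in which case it equals $\frac{1}{n_i}\chi_{(n_i,y+n_i)}(x)$. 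Hence only the $i$-th coordinate, equal to $\frac{1}{n_i}\chi_{(n_i,y+n_i)}(x)$, and the $(i+1)$-th coordinate, equal to $-\frac{1}{n_i}\chi_{(n_i,y+n_i)}(x)$, are nonzero, so for $x\in(n_i,n_{i+1})$
$$\|K(x-y)-K(x)\|_{\ell^s(\mathbb{Z}^+)}=2^{1/s}\,\frac{1}{n_i}\,\chi_{(n_i,y+n_i)}(x).$$

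Next I would integrate over $(n_i,n_{i+1})$. Since $0<y\le n_j$ and $i\ge j+\gamma$, inequality~(\ref{eq3}) with $k=i$ gives $y+n_i\le n_j+n_i\le n_{i+1}$, so $(n_i,y+n_i)\subseteq(n_i,n_{i+1})$ and the integral of $\chi_{(n_i,y+n_i)}$ over the annulus is exactly $y$. Therefore
$$\left(\int_{n_i}^{n_{i+1}}\|K(x-y)-K(x)\|_{\ell^s(\mathbb{Z}^+)}^r\,dx\right)^{1/r}=2^{1/s}\,\frac{y^{1/r}}{n_i}=2^{1/s}\left(\frac{y}{n_i}\right)^{1/r}n_i^{1/r-1}.$$
Because $y\le n_j$ and $i-j\ge\gamma$, inequality~(\ref{eq2}) yields $\frac{y}{n_i}\le\frac{n_j}{n_i}\le\beta^{-(i-j)}$, so the asserted bound holds with $C_i=2^{1/s}\beta^{-(i-j)/r}$. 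These constants are summable in $i$ since $\beta>1$, and because $n_{i+1}/n_i\le\beta^2$ each dyadic shell $S_l(|y|)$ overlaps only a bounded number of the annuli $(n_i,n_{i+1})$; summing the annular estimates therefore produces the $D_r$ inequality with a summable sequence, as claimed.

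I do not anticipate a real obstacle here---the entire content sits in Lemma~\ref{lacseq}. The only points needing care are the bookkeeping of the two surviving coordinates of the telescoped kernel and the check, via~(\ref{eq3}), that the short interval $(n_i,y+n_i)$ does not reach past $n_{i+1}$; both are immediate from the normalization $\beta^2\ge n_{k+1}/n_k\ge\beta$ supplied by Remark~\ref{lacrem}.
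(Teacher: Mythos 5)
Your proposal is correct and follows essentially the same route as the paper: reduce to Lemma~\ref{lacseq} to see which coordinates of the telescoped kernel survive on $(n_i,n_{i+1})$, integrate the characteristic function of $(n_i,y+n_i)$ using (\ref{eq3}), and convert $y\leq n_j$ into the decay factor $\beta^{-(i-j)/r}$ via (\ref{eq2}). In fact your bookkeeping is tighter than the paper's, which bounds the $\ell^s$ norm by an $\ell^1$ sum and a geometric series rather than observing, as you do, that exactly the coordinates $k=i$ and $k=i+1$ are nonzero, giving the norm exactly as $2^{1/s}n_i^{-1}\chi_{(n_i,y+n_i)}(x)$ and the cleaner constant $C_i=2^{1/s}\beta^{-(i-j)/r}$.
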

\begin{proof}Let
$$\Phi_k(x,y)=\phi_k(x-y)-\phi_k(x).$$
Then it is easy to check that
$$K(x-y)-K(x)=\{\Phi_k(x,y)-\Phi_{k-1}(x,y)\}_{k\in \mathbb{Z}^+}.$$
On the other hand, because of a property of the norm we have
\begin{align*}
\|K(x-y)-K(x)\|_{\ell^s(\mathbb{Z}^+)}&=\|\Phi_k(x,y)-\Phi_{k-1}(x,y) \|_{\ell^s(\mathbb{Z}^+)}\\
&\leq \|\Phi_k(x,y)\|_{\ell^s(\mathbb{Z}^+)}+\|\Phi_{k-1}(x,y) \|_{\ell^s(\mathbb{Z}^+)}\\
&\leq 2\|\Phi_{k-1}(x,y)\|_{\ell^s(\mathbb{Z}^+)},
\end{align*}
where $x$ and $y$ are fixed and $\|\Phi_{k-1}(x,y)\|_{\ell^s(\mathbb{Z}^+)}$ is the $\ell^s(\mathbb{Z}^+)$-norm of the sequence whose $k^{\text {th}}$-entry is $\Phi_k(x,y)$.\\
\noindent
We now have
\begin{align*}
\left(\int_{n_i}^{n_{i+1}}\|K(x-y)-K(x)\|^r_{\ell^s(\mathbb{Z}^+)}\, dx\right)^{1/r}&\leq 2\left(\int_{n_i}^{n_{i+1}}\|\Phi_{k-1}(x,y)\|^r_{\ell^s(\mathbb{Z}^+)}\, dx\right)^{1/r}\\
&\leq 2\left(\int_{n_i}^{n_{i+1}}\|\Phi_{k-1}(x,y)\|^r_{\ell^1(\mathbb{Z}^+)}\, dx\right)^{1/r}\\
&= 2\left(\int_{n_i}^{n_{i+1}}\left(\sum_{n_i<n_{k-1}} \frac{1}{n_{k-1}}\chi_{(n_i,y+n_i)}(x)\right)^{r}\, dx\right)^{1/r}\\
&= 2\left(\int_{n_i}^{n_{i+1}}\left(\sum_{n_i<n_{k-1}} \frac{1}{\beta^{2(k-2)}}\chi_{(n_i,y+n_i)}(x)\right)^{r}\, dx\right)^{1/r}\\
&\leq 2\left(\beta^{2}+\frac{1}{1-\beta^{2}}\right)\cdot \frac{1}{n_i}\cdot \left(\int_{n_i}^{n_{i+1}}\chi_{(n_i, y+n_i)}(x)\, dx\right)^{1/r}\\
&= 2\left(\beta^{2}+\frac{1}{1-\beta^{2}}\right)\cdot \frac{1}{n_i}\cdot y^{1/r}\\
&\leq  2 \left(\beta^{2}+\frac{1}{1-\beta^{2}}\right)\frac{1}{\beta^{(i-j)/r}}n_i^{1/r-1}
\end{align*}
where in the last inequality we used
$$y\leq n_j\leq\frac{n_i}{\beta^{i-j}}$$
by (\ref{eq2}), and this completes our proof with
$$C_i=2 \left(\beta^{2}+\frac{1}{1-\beta^{2}}\right)\frac{1}{\beta^{(i-j)/r}}.$$

\end{proof}


\begin{lem}\label{ftrmbd}Let $\{n_k\}$ be a lacunary sequence then there exists a constant $C>0$ such that
$$\sum_{k=1}^\infty|\hat{\phi}_k(x)-\hat{\phi}_{k-1}(x)|<C$$
for all $x\in\mathbb{R}$, where $\phi_k(x)=\frac{1}{n_k}\chi_{(0,n_k)}(x)$, and $\hat{\phi}_k$ is its Fourier transform.
\end{lem}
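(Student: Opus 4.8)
The plan is to recognize each $\hat\phi_k$ as a dilate of one fixed function and exploit that function's boundedness, smoothness, and decay. With the normalization $\hat g(\xi)=\int g(t)e^{-2\pi i\xi t}\,dt$ (any other normalization merely changes the numerical constants below), put
$$\psi(u)=\int_0^1 e^{-2\pi i u t}\,dt=\frac{1-e^{-2\pi i u}}{2\pi i u};$$
the substitution $t\mapsto n_k t$ gives $\hat\phi_k(x)=\psi(n_k x)$ for every $k$. Three elementary facts about $\psi$ will do all the work: $|\psi(u)|\le 1$ for every $u$ (an average of unimodular numbers); $|\psi'(u)|\le\pi$ for every $u$, obtained by differentiating under the integral sign, $\psi'(u)=\int_0^1(-2\pi i t)e^{-2\pi i ut}\,dt$; and $|\psi(u)|\le\frac1{\pi|u|}$ for $u\ne 0$, since $|1-e^{-2\pi i u}|\le 2$. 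Thus $\psi$ is bounded, has bounded derivative, and decays like $1/|u|$.

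Fix $x\in\mathbb R$. If $x=0$ then $\hat\phi_k(0)=1$ for all $k$ and every summand is zero, so assume $x\ne 0$; nothing changes under $x\mapsto-x$, so assume $x>0$. Since $n_k\to\infty$ we may assume $n_0x\le 1$ (otherwise every index lies in the high-frequency regime handled below, with the sum simply started at $k=1$), and we let $k_0\ge 0$ be the largest index with $n_{k_0}x\le 1$, so that $n_{k_0+1}x>1$. I split the series at $k_0$: let $S_1=\sum_{k=1}^{k_0}|\psi(n_k x)-\psi(n_{k-1}x)|$ and $S_2=\sum_{k=k_0+1}^\infty|\psi(n_k x)-\psi(n_{k-1}x)|$.

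For $S_1$ every argument satisfies $n_{k-1}x\le n_k x\le n_{k_0}x\le 1$, so the mean value theorem with $\|\psi'\|_\infty\le\pi$, followed by telescoping, gives
$$S_1\le\pi x\sum_{k=1}^{k_0}(n_k-n_{k-1})=\pi x\,(n_{k_0}-n_0)\le\pi\,n_{k_0}x\le\pi.$$
For $S_2$ I discard the cancellation, bound each term by $|\psi(n_k x)|+|\psi(n_{k-1}x)|$, and apply the decay estimate $|\psi(u)|\le\frac1{\pi|u|}$ to every argument $n_k x$ with $k\ge k_0+1$, keeping the trivial bound $|\psi|\le 1$ only for the single boundary argument $\psi(n_{k_0}x)$. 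Lacunarity gives $n_k x\ge\beta^{\,k-k_0-1}n_{k_0+1}x>\beta^{\,k-k_0-1}$ for $k\ge k_0+1$, so both resulting sums are dominated by $\frac1\pi\sum_{m\ge 0}\beta^{-m}=\frac{\beta}{\pi(\beta-1)}$; hence $S_2\le 1+\frac{2\beta}{\pi(\beta-1)}$. Adding the two estimates proves the lemma with $C=\pi+1+\frac{2\beta}{\pi(\beta-1)}$, a constant depending only on the lacunarity constant $\beta$.

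I do not expect a genuine obstacle; the one spot calling for care is the bookkeeping at the transition index $k_0$ — in $S_2$ the argument $\psi(n_{k_0}x)$ must be controlled by $|\psi|\le 1$ rather than by the decay bound (which degenerates as $n_{k_0}x\to 0$), and the telescoping sum in $S_1$ must be collapsed using $n_{k_0}x\le 1$. Note that the argument uses only the defining inequality $n_{k+1}/n_k\ge\beta$ and does not require the refinement $n_{k+1}/n_k\le\beta^2$ from Remark~\ref{lacrem}.
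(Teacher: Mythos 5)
Your proof is correct and takes essentially the same approach as the paper's: both split the sum at the transition scale $|x|n_k\approx 1$, control the low-frequency terms by the mean value theorem applied to $u\mapsto(1-e^{-iu})/u$ (up to normalization), and control the high-frequency terms by the $1/(|x|n_k)$ decay together with lacunarity via a geometric series. Your execution is in fact somewhat cleaner — the uniform bound $|\psi'|\le\pi$ with telescoping of $n_k-n_{k-1}$, and the explicit handling of the boundary term $\psi(n_{k_0}x)$, avoid the paper's reliance on ratio bounds as in Remark~\ref{lacrem} and yield a constant depending only on $\beta$ — but the underlying argument is the same.
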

\begin{proof}First note that we have
$$I(x)=\sum_{k=1}^\infty|\hat{\phi}_k(x)-\hat{\phi}_{k-1}(x)|=\sum_{k=1}^\infty\left|\frac{1-e^{-ixn_k}}{xn_k}-\frac{1-e^{-ixn_{k-1}}}{xn_{k-1}}\right|.$$
Let
$$I(x)=\sum_{\{k:|x|n_k\geq 1\}}|\hat{\phi}_k(x)-\hat{\phi}_{k-1}(x)|+\sum_{\{k:|x|n_k<1\}}|\hat{\phi}_k(x)-\hat{\phi}_{k-1}(x)|=I_1(x)+I_2(x).$$
Let us now fix $x\in\mathbb{R}$ and let $k_0$ be the first $k$ such that $|x|n_k\geq 1$. Since $\hat{\phi}_k(x)$ is an even function we can assume without the loss of generality that $x\geq 0$.\\
We clearly have
$$I_1(x)\leq\sum_{\{k:|x|n_k\geq 1\}}\frac{4}{|x|n_k}.$$
Since the sequence $\{n_k\}$ is lacunary there exists a constant $\beta >1$ such that
$$\frac{n_{k+1}}{n_k}\geq\beta$$
for all $k\in\mathbb{N}$. Also note that in summation,$I_1$, the term with index $n_{k_0}$ is the term with smallest index since it is the first term satisfiying condition $|x|n_k\geq 1$ and the sequence $\{n_k\}$ is increasing. On the other hand, we have
$$\frac{n_{k_0}}{n_k}=\frac{n_{k_0}}{n_{k_0+1}}\cdot\frac{n_{k_0+1}}{n_{k_0+2}}\cdot\frac{n_{k_0+2}}{n_{k_0+3}}\cdots \frac{n_{k-1}}{n_k}\leq\frac{1}{\beta^k}.$$
We now have
\begin{align*}
I_1(x)&\leq\sum_{\{k:|x|n_k\}}\frac{4}{|x|n_k}\\
&=\sum_{\{k:|x|n_k\geq 1\}}\frac{4n_{k_0}}{|x|n_{k_0}n_k}\\
&=\frac{4}{|x|n_{k_0}}\sum_{\{k:|x|n_k\geq 1\}}\frac{n_{k_0}}{n_k}\\
&\leq 4\sum_{\{k:|x|n_k\geq 1\}}\frac{1}{\beta^k}\\
\end{align*}
since $\frac{1}{|x|n_{k_0}}\leq 1$ and $\frac{n_{k_0}}{n_k}=\frac{1}{\beta^k}$. Also, since
$$\sum_{k=1}^\infty\frac{1}{\beta^k}=\frac{1}{1-\frac{1}{\beta}}$$
we clearly see that
$$I_1(x)\leq C_1$$
for some constant $C_1>0$.\\
To control the summation $I_2$ let us first define the function $F$ as
$$F(r)=\frac{1-e^{-ir}}{r}, $$
then we have $\hat{\phi}_k(x)=F(xn_k)$. Now by the Mean Value Theorem there exists a constant $\xi \in (xn_k, xn_{k+1})$ such that
$$|F(xn_{k+1})-F(xn_k)|= |F^\prime (\xi )| |xn_{k+1}-xn_k|.$$
Also, it is easy to verify that
$$|F^\prime (x)|\leq \frac{x+2}{x^2}$$
for $x>0$.\\
\noindent
Now we have
\begin{align*}
|F(xn_{k+1})-F(xn_k)|&= |F^\prime (\xi )| |xn_{k+1}-xn_k|\\
&\leq \frac{\xi +2}{\xi^2}|x|(n_{k+1}-n_k)\\
&\leq \frac{xn_{k+1}+2}{x^2n_k^2}|x|(n_{k+1}-n_k)\\
&=\frac{2n_{k+1}}{n_k^2}(n_{k+1}-n_k).
\end{align*}
Thus we have
\begin{align*}
I_2(x)&=\sum_{\{k:|x|n_k<1\}}|F(xn_{k+1})-F(xn_k)|\\
&\leq \sum_{\{k:|x|n_k<1\}}\frac{2}{|x|n_k}\cdot \frac{2n_{k+1}}{n_k^2}(n_{k+1}-n_k)\\
&\leq   \sum_{\{k:|x|n_k<1\}}\frac{4n_{k+1}^2}{n_k^2|x|}\left(\frac{1}{n_k}-\frac{1}{n_{k+1}}\right)\\
&= \sum_{\{k:|x|n_k<1\}}\frac{16}{|x|}\left(\frac{1}{n_k}-\frac{1}{n_{k+1}}\right)\\
&=\frac{16}{|x|}\left(\frac{1}{n_1}-\frac{1}{n_{k_0+1}}\right)\\
&\leq \frac{16}{|x|n_{k_0+1}}\\
&\leq 16.
\end{align*}
We thus conclude that
$$I(x)=I_1(x)+I_2(x)\leq C_1+16=C$$
for all $x\in\mathbb{R}$ and this completes our proof.
\end{proof}

\begin{lem}\label{strl2} Let $s\geq 2$ and $(n_k)$ be a lacunary sequence. Then there exits a constant $C>0$ such that
$$\|\mathcal{V}_sf\|_{ L^2(\mathbb{R})}\leq C\|f\|_{ L^2(\mathbb{R})}$$
for all $f\in L^2(\mathbb{R})$.
\end{lem}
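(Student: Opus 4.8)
The plan is to reduce to the exponent $s=2$ and then run a Plancherel argument whose only real input is Lemma~\ref{ftrmbd}. First I would note that since $s\geq 2$ we have the inclusion $\ell^2(\mathbb{Z}^+)\hookrightarrow\ell^s(\mathbb{Z}^+)$ with norm at most $1$, hence the pointwise bound
$$\mathcal{V}_sf(x)=\left(\sum_{k=1}^\infty|A_{n_k}f(x)-A_{n_{k-1}}f(x)|^s\right)^{1/s}\leq\left(\sum_{k=1}^\infty|A_{n_k}f(x)-A_{n_{k-1}}f(x)|^2\right)^{1/2}=\mathcal{V}_2f(x)$$
for every $x\in\mathbb{R}$. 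Thus it suffices to prove $\|\mathcal{V}_2f\|_{L^2(\mathbb{R})}\leq C\|f\|_{L^2(\mathbb{R})}$.

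Next I would use $A_{n_k}f=\phi_k\ast f$ with $\phi_k=\frac{1}{n_k}\chi_{(0,n_k)}$. Since the summand is nonnegative, Tonelli's theorem permits interchanging the sum and the integral, and then Plancherel's theorem applied to each term together with $\widehat{g\ast f}=\hat g\,\hat f$ gives
$$\|\mathcal{V}_2f\|_{L^2(\mathbb{R})}^2=\sum_{k=1}^\infty\int_{\mathbb{R}}|(\phi_k-\phi_{k-1})\ast f(x)|^2\,dx=\sum_{k=1}^\infty\int_{\mathbb{R}}|\hat\phi_k(\xi)-\hat\phi_{k-1}(\xi)|^2\,|\hat f(\xi)|^2\,d\xi.$$
Interchanging again by Tonelli, this equals $\int_{\mathbb{R}}\big(\sum_{k=1}^\infty|\hat\phi_k(\xi)-\hat\phi_{k-1}(\xi)|^2\big)|\hat f(\xi)|^2\,d\xi$.

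Finally I would bound the multiplier uniformly in $\xi$: Lemma~\ref{ftrmbd} provides a constant $C>0$ with $\sum_{k=1}^\infty|\hat\phi_k(\xi)-\hat\phi_{k-1}(\xi)|<C$ for all $\xi$, and since $\sum_k a_k^2\leq\big(\sum_k a_k\big)^2$ for nonnegative $a_k$, we obtain $\sum_{k=1}^\infty|\hat\phi_k(\xi)-\hat\phi_{k-1}(\xi)|^2<C^2$. Substituting this and using Plancherel once more for $f$ yields $\|\mathcal{V}_2f\|_{L^2(\mathbb{R})}^2\leq C^2\|\hat f\|_{L^2}^2=C^2\|f\|_{L^2(\mathbb{R})}^2$, which finishes the proof. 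There is no genuine obstacle here; the whole weight of the argument rests on the $\ell^1$-type Fourier estimate already established in Lemma~\ref{ftrmbd}, and the only steps needing a word of justification are the monotonicity $\ell^2\hookrightarrow\ell^s$ (which is exactly where $s\geq 2$ is used) and the two Tonelli interchanges, both immediate because all integrands are nonnegative.
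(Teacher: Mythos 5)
Your proposal is correct and follows essentially the same route as the paper: both reduce $\mathcal{V}_s$ to $\mathcal{V}_2$ via the $\ell^2\hookrightarrow\ell^s$ embedding, interchange sum and integral, and apply Plancherel together with the uniform Fourier bound of Lemma~\ref{ftrmbd}. The only cosmetic difference is that you bound $\sum_k a_k^2\leq\left(\sum_k a_k\right)^2\leq C^2$, whereas the paper uses $\sum_k a_k^2\leq\sum_k a_k$; your version is if anything cleaner, since it needs no bound on the individual terms.
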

\begin{proof}Since 
$$\sum_{k=1}^\infty|\hat{\phi}_k(x)-\hat{\phi}_{k-1}(x)|^2\leq \sum_{k=1}^\infty|\hat{\phi}_k(x)-\hat{\phi}_{k-1}(x)|,$$
it is clear from  Lemma~\ref{ftrmbd} that there exists a constant $C>0$ such that
$$\sum_{k=1}^\infty|\hat{\phi}_k(x)-\hat{\phi}_{k-1}(x)|^2<C$$
for all $x\in\mathbb{R}$.\\
\noindent
We now obtain
\begin{align*}
\|\mathcal{V}_sf\|_{ L^2(\mathbb{R})}&=\int_{\mathbb{R}}\left(\sum_{k=1}^\infty\left|\phi_k\ast f(x)-\phi_{k-1}\ast f(x)\right|^\rho\right)^{2/\rho}\, dx\\
&\leq \int_{\mathbb{R}}\sum_{k=1}^\infty\left|\phi_k\ast f(x)-\phi_{k-1}\ast f(x)\right|^2\, dx\\
&= \sum_{k=1}^\infty\int_{\mathbb{R}}\left|\phi_k\ast f(x)-\phi_{k-1}\ast f(x)\right|^2\, dx\\
&= \sum_{k=1}^\infty\int_{\mathbb{R}}\left|(\phi_k-\phi_{k-1})\ast f(x)\right|^2\, dx\\
&= \sum_{k=1}^\infty\int_{\mathbb{R}}\left|\Delta_k\ast f(x)\right|^2\, dx\;\;\;\;\;(\Delta_k(x)=\phi_k(x)-\phi_{k-1}(x))\\
&=\sum_{k=1}^\infty\int_{\mathbb{R}}|\widehat{\Delta_k\ast f}(x)|^2\, dx \;\;\;\;\;\textrm{(by Plancherel's theorem)}\\
&=\sum_{k=1}^\infty\int_{\mathbb{R}}|\widehat{\Delta_k}(x)|^2\cdot|\hat{ f}(x)|^2\, dx\\
&=\int_{\mathbb{R}}\sum_{k=1}^\infty|\widehat{\Delta_k}(x)|^2\cdot|\hat{ f}(x)|^2\, dx\\
&=\int_{\mathbb{R}}\sum_{k=1}^\infty|\hat{\phi}_k(x)-\hat{\phi}_{k-1}(x)|^2\cdot|\hat{ f}(x)|^2\, dx\\
&\leq C \int_{\mathbb{R}}|\hat{ f}(x)|^2\, dx\\
&=C\int_{\mathbb{R}}| f(x)|^2\, dx\;\;\;\;\;\textrm{(by Plancherel's theorem)}\\
&=C\|f\|_{ L^2(\mathbb{R})}^2
\end{align*}
as desired.
\end{proof}


\begin{rem}\label{drrem} Since for $s \geq 2$, we have proved in Lemma~\ref{drlem} that the kernel operator $K(x)=\{\phi_k(x)-\phi_{k-1}(x)\}_{k\in\mathbb{Z}^+}$ satisfies $D_r$ condition for $1\leq r <\infty$, it specifically satisfies $D_1$ condition. We also have proved in Lemma~\ref{strl2} that $Tf=\{(\phi_k-\phi_{k-1})\ast f \}_{k\in\mathbb{Z}^+}$ is a bounded operator from $L^2(\mathbb{R})$ to $L^2_{\ell^s({\mathbb{Z}^+})}(\mathbb{R})$ since $\|K\ast f(x)\|_{\ell^s(\mathbb{Z}^+)}=\mathcal{V}_sf(x)$. Therefore,  $Tf=\{(\phi_k-\phi_{k-1})\ast f \}_{k\in\mathbb{Z}^+}$ is an $\ell^s$-valued singular operator of convolution type for $s \geq 2$.
\end{rem}

\begin{lem}\label{singmaps} A singular integral operator $T$ mapping $A$-valued functions into $B$-valued functions can be extended to an operator defined in all $L_A^p$, $1\leq p<\infty$, and satisfying
\begin{enumerate}[label=\upshape(\roman*), leftmargin=*, widest=iii]
\item \label{sininta} $\|Tf\|_{L_B^p}\leq C_p\|f\|_{L_A^p},\quad 1<p<\infty,$
\item \label{sinintb} $\|Tf\|_{WL_B^1}\leq C_1\|f\|_{L_A^1},$
\item \label{sinintc} $\|Tf\|_{L_B^1}\leq C_2\|f\|_{H_A^1},$
\item \label{sinintd}$\|Tf\|_{{\rm{BMO}}(B)}\leq C_3\|f\|_{L^{\infty}(A)},\quad f\in L_c^{\infty}(A),$
\end{enumerate}
where $C_p,C_1,C_2,C_3>0$, and $L_c^{\infty}(A)$ is the space of bounded functions with compact support.
\end{lem}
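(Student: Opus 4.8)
\medskip
\noindent\textbf{Proof proposal.} This is the classical Calder\'on--Zygmund theory for vector-valued singular integrals, so the plan is to reproduce the standard scheme; the only properties of $T$ that will be used are the defining ones, namely that $T$ is a convolution operator with an $\mathcal{L}(A,B)$-valued kernel $K$ which is bounded from $L^2_A(\mathbb{R})$ into $L^2_B(\mathbb{R})$ and whose kernel $K$ satisfies the H\"ormander condition (which follows from $D_1$, hence from $D_r$ for any $1\le r<\infty$).

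First I would establish the weak type $(1,1)$ bound \ref{sinintb}. Fixing $\lambda>0$, apply the Calder\'on--Zygmund decomposition to $f\in L^1_A$ at height $\lambda$: write $f=g+\sum_j b_j$ where $\|g\|_{L^\infty_A}\lesssim\lambda$, $\|g\|_{L^1_A}\le\|f\|_{L^1_A}$, each $b_j$ is supported in a dyadic interval $Q_j$ with $\int b_j=0$ and $\|b_j\|_{L^1_A}\lesssim\lambda|Q_j|$, and $\sum_j|Q_j|\lesssim\lambda^{-1}\|f\|_{L^1_A}$. For $g$, Chebyshev's inequality together with the $L^2$ bound and the estimate $\|g\|_{L^2_A}^2\lesssim\lambda\|g\|_{L^1_A}$ give $\bigl|\{\|Tg\|_B>\lambda/2\}\bigr|\lesssim\lambda^{-1}\|f\|_{L^1_A}$. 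For the bad part, off the enlarged set $\Omega^\ast=\bigcup_j 2Q_j$ (whose measure is $\lesssim\lambda^{-1}\|f\|_{L^1_A}$) the cancellation of $b_j$ lets one write $Tb_j(x)=\int_{Q_j}\bigl(K(x-y)-K(x-c_j)\bigr)b_j(y)\,dy$ with $c_j$ the center of $Q_j$, whence the H\"ormander condition yields $\int_{(2Q_j)^c}\|Tb_j(x)\|_B\,dx\lesssim\|b_j\|_{L^1_A}$; summing in $j$ and applying Chebyshev once more completes \ref{sinintb}.

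Next, \ref{sininta} follows by interpolation and duality: Marcinkiewicz interpolation between \ref{sinintb} and the $L^2$ estimate yields the strong type $(p,p)$ inequality for $1<p\le 2$, and since the transpose of $T$ is the convolution operator whose kernel at $x$ is the adjoint of $K(-x)$, which is again $L^2$-bounded and satisfies the H\"ormander condition, the same reasoning applied to the transpose together with duality gives \ref{sininta} for $2\le p<\infty$. For \ref{sinintc} I would use the atomic decomposition of $H^1_A$: by linearity and density it suffices to bound $\|Ta\|_{L^1_B}$ uniformly over $A$-valued atoms $a$ supported in an interval $Q$ with $\int a=0$ and $\|a\|_{L^\infty_A}\le|Q|^{-1}$. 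Over $2Q$ one uses H\"older's inequality and the $L^2$ bound, $\int_{2Q}\|Ta\|_B\le|2Q|^{1/2}\|Ta\|_{L^2_B}\lesssim|Q|^{1/2}\|a\|_{L^2_A}\lesssim 1$, and over $(2Q)^c$ one again exploits $\int a=0$ and the H\"ormander condition, exactly as for the $b_j$ above, to obtain $\int_{(2Q)^c}\|Ta\|_B\,dx\lesssim 1$. Finally, for \ref{sinintd}, given $f\in L^\infty_c(A)$ and an interval $Q$ centered at $x_0$, split $f=f\chi_{2Q}+f\chi_{(2Q)^c}$; the first piece is controlled on $Q$ by H\"older's inequality and the $L^2$ bound, $\frac{1}{|Q|}\int_Q\|Tf\chi_{2Q}\|_B\lesssim|Q|^{-1/2}\|f\chi_{2Q}\|_{L^2_A}\lesssim\|f\|_{L^\infty(A)}$, while for the second piece one subtracts the constant $c_Q=\int_{(2Q)^c}K(x_0-y)f(y)\,dy\in B$ and estimates, for $x\in Q$, $\|Tf\chi_{(2Q)^c}(x)-c_Q\|_B\le\int_{(2Q)^c}\|K(x-y)-K(x_0-y)\|_{\mathcal{L}(A,B)}\|f(y)\|_A\,dy\lesssim\|f\|_{L^\infty(A)}$ by the H\"ormander condition; averaging over $Q$ gives \ref{sinintd}.

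The main obstacle is not any single deep step but the bookkeeping: one must check that the Calder\'on--Zygmund decomposition and the atomic theory of $H^1_A$ go through verbatim in the $A$-valued setting (they do, since the stopping-time and size conditions involve only the scalar function $x\mapsto\|f(x)\|_A$), and one must verify carefully that the H\"ormander condition is precisely what controls each of the tail integrals for $Tb_j$, $Ta$ and $Tf\chi_{(2Q)^c}$. No idea beyond the scalar theory is needed; indeed this lemma goes back to Benedek, Calder\'on and Panzone, so one may alternatively simply invoke it by citation.
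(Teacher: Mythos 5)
Your argument is essentially correct, but it takes a different route from the paper: the paper does not prove this lemma at all, it simply cites it as Theorem 1.3 of Part II in Rubio de Francia--Ruiz--Torrea (Calder\'on--Zygmund theory for operator-valued kernels), exactly the option you mention in your last sentence. What you have written is, in effect, a reconstruction of the proof of that cited theorem: Calder\'on--Zygmund decomposition of $\|f(\cdot)\|_A$ for the weak $(1,1)$ bound (ii), Marcinkiewicz interpolation for $1<p\le 2$, atomic decomposition of $H^1_A$ for (iii), and the standard subtraction of the constant $c_Q$ plus the H\"ormander condition for the $L^\infty\to\mathrm{BMO}$ bound (iv); all of these steps do go through in the vector-valued setting, as you say, because the stopping-time constructions only see the scalar function $x\mapsto\|f(x)\|_A$. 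The one point where your sketch is slightly more delicate than you acknowledge is the passage to $2<p<\infty$ in (i) by duality: in the $B$-valued setting $(L^p_B)^\ast=L^{p'}_{B^\ast}$ requires a geometric hypothesis on $B$ (e.g.\ $B^\ast$ having the Radon--Nikodym property), which holds for the spaces $B=\ell^s$, $2\le s<\infty$, used in this paper, but is not automatic for general Banach $B$ as stated in the lemma; the cleaner route, and the one consistent with the cited source, is to get $2<p<\infty$ by interpolating between the $L^2$ bound and the $L^\infty\to\mathrm{BMO}$ endpoint (iv), which you prove anyway and which needs no duality. With that adjustment your self-contained argument is sound; the trade-off is simply that the paper's citation is shorter, while your version makes the lemma independent of the reference.
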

\begin{proof}This is Theorem 1.3 of Part II in  J. L. Rubio de Francia \textit{et al}~\cite{jlrdffjrjlt}.
\end{proof}
The following theorem is our first result:
\begin{thm}\label{infvars}Let $2\leq s<\infty$, and $(n_k)$ be a lacunary sequence. Then there exits a constant $C>0$ such that
$$\|\mathcal{V}_sf\|_{ L^1(\mathbb{R})}\leq C\|f\|_{H^1(\mathbb{R})}$$
for all $f\in H^1(\mathbb{R})$.
\end{thm}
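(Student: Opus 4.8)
The plan is to deduce Theorem~\ref{infvars} directly from the machinery already assembled, namely the abstract singular-integral transference Lemma~\ref{singmaps} applied to the $\ell^s(\mathbb{Z}^+)$-valued convolution operator $T$ introduced in Remark~\ref{drrem}. First I would recall that $T f = \{(\phi_k-\phi_{k-1})\ast f\}_{k\in\mathbb{Z}^+}$ carries scalar-valued functions on $\mathbb{R}$ into $\ell^s(\mathbb{Z}^+)$-valued functions, and that by construction $\|Tf(x)\|_{\ell^s(\mathbb{Z}^+)} = \mathcal{V}_s f(x)$ pointwise. So the estimate to be proved is literally the statement $\|Tf\|_{L^1_{\ell^s(\mathbb{Z}^+)}(\mathbb{R})} \le C\|f\|_{H^1(\mathbb{R})}$, which is item~\ref{sinintc} of Lemma~\ref{singmaps} with $A=\mathbb{C}$ (scalars) and $B=\ell^s(\mathbb{Z}^+)$.

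The remaining task is therefore to verify that $T$ qualifies as a ``singular integral operator mapping $A$-valued functions into $B$-valued functions'' in the sense required by Lemma~\ref{singmaps} (i.e.\ Theorem 1.3, Part II, of Rubio de Francia et al.~\cite{jlrdffjrjlt}). That hypothesis has two ingredients, both of which are already established in the excerpt. The $L^2$ boundedness $T : L^2(\mathbb{R}) \to L^2_{\ell^s(\mathbb{Z}^+)}(\mathbb{R})$ is exactly Lemma~\ref{strl2} together with the identity $\|K\ast f\|_{\ell^s} = \mathcal{V}_s f$. The kernel regularity hypothesis is the Hörmander condition, i.e.\ the $D_1$ condition, for the $\ell^s(\mathbb{Z}^+)$-valued kernel $K(x)=\{\phi_k(x)-\phi_{k-1}(x)\}_{k}$; Lemma~\ref{drlem} proves the stronger $D_r$ condition for every $1\le r<\infty$, and as noted in Remark~\ref{drrem} this in particular gives $D_1$. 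One should be slightly careful here: Lemma~\ref{drlem} as stated controls the integral over a single dyadic-type shell $(n_i, n_{i+1})$ only when $i \ge j+\gamma$, using Lemma~\ref{lacseq}; to obtain the genuine Hörmander bound $\int_{|x|>2|y|}\|K(x-y)-K(x)\|_{\ell^s}\,dx \le C$ one sums the $C_i n_i^{1/r-1}$ bounds (with $r=1$, so the exponent is $0$ and each term is just $C_i = 2(\beta^2 + (1-\beta^2)^{-1})\beta^{-(i-j)}$) over the relevant range of $i$, which converges geometrically, and absorbs the finitely many remaining shells into the constant using the trivial bound $\|K(x-y)-K(x)\|_{\ell^s}\le 2\|K(x)\|_{\ell^s}$ plus the $L^1$-integrability of $K$. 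For $y>0$ arbitrary one picks $j$ with $n_{j-1} < y \le n_j$, notes $y\le n_j$, and the shells $\{x: |x|>2|y|\}$ are covered by $\bigcup_{i\ge j} (n_i, n_{i+1})$ up to a bounded remainder; the tail $i \ge j+\gamma$ is handled by Lemma~\ref{drlem} and the $\gamma$-many intermediate shells contribute a bounded amount.

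Having checked the two structural hypotheses, I would simply invoke Lemma~\ref{singmaps}\ref{sinintc} to conclude $\|Tf\|_{L^1_{\ell^s(\mathbb{Z}^+)}(\mathbb{R})} \le C_2 \|f\|_{H^1_{\mathbb{C}}(\mathbb{R})} = C_2\|f\|_{H^1(\mathbb{R})}$, and rewrite the left side as $\|\mathcal{V}_s f\|_{L^1(\mathbb{R})}$. The only genuine point requiring care — hence the main obstacle — is the passage from the shell-by-shell estimate of Lemma~\ref{drlem} (which is phrased with the lacunary points $n_i$ playing the role of the dyadic radii $2^l|y|$ in the abstract $D_r$ definition) to the precise form of the $D_1$/Hörmander condition demanded by the reference: one must make sure the index shift $i \mapsto l$, the restriction $i\ge j+\gamma$, and the choice of the $c_l$ sequence all line up so that $\sum_l c_l < \infty$, and that the finitely many omitted shells near $|x|\sim 2|y|$ are harmless. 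This is a routine but not entirely cosmetic bookkeeping step; everything else is an immediate citation.
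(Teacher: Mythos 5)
Your proposal is correct and follows essentially the same route as the paper: the paper's own proof is a one-line citation of Remark~\ref{drrem} together with Lemma~\ref{singmaps}\,(iii), using the identity $\|K\ast f(x)\|_{\ell^s(\mathbb{Z}^+)}=\mathcal{V}_sf(x)$, exactly as you do. Your additional bookkeeping on summing the shell estimates of Lemma~\ref{drlem} to get the genuine H\"ormander ($D_1$) bound is a detail the paper leaves implicit inside Lemma~\ref{drlem} and Remark~\ref{drrem}, so it is a welcome elaboration rather than a deviation.
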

\begin{proof}This follows from Remark~\ref{drrem} and  Lemma~\ref{singmaps}\,(iii) since  $\|K\ast f(x)\|_{\ell^s(\mathbb{Z}^+)}=\mathcal{V}_sf(x)$.
\end{proof}
\begin{rem} We have proved that $Tf=\{(\phi_k-\phi_{k-1})\ast f \}_{k\in\mathbb{Z}^+}$ is an $\ell^s$-valued singular operator of convolution type for $s \geq 2$. By applying  Lemma~\ref{singmaps} to this observation we also provide  a different proof for the following known facts for $s \geq 2$ (see \cite{jkw1}) since  $\|K\ast f(x)\|_{\ell^s(\mathbb{Z}^+)}=\mathcal{V}_sf(x)$.
\begin{enumerate}[label=\upshape(\roman*), leftmargin=*, widest=iii]
\item  $\|\mathcal{V}_sf\|_{ L^p(\mathbb{R})}\leq C_p\|f\|_{ L^p(\mathbb{R})},\quad 1<p<\infty,$
\item $\|\mathcal{V}_sf\|_{WL^1(\mathbb{R})}\leq C_1\|f\|_{ L^1(\mathbb{R})},$
\item $\|\mathcal{V}_sf\|_{{\rm{BMO}(\mathbb{R})}}\leq C_2\|f\|_{L^{\infty}(\mathbb{R})},\quad f\in L_c^{\infty}(\mathbb{R}),$
\end{enumerate}
where $C_p,C_1,C_2>0$.
\end{rem} 
Let $w\in L_{\text{loc}}^1(\mathbb{R})$ be a positive function. We say that $w$ is an $A_p$ weight for some $1<p<\infty$ if the following condition is satisfied:
$$\sup_I\left(\frac{1}{|I|}\int_Iw(x)\, dx\right)\left(\frac{1}{|I|}\int_Iw(x)^{-\frac{1}{p-1}}\, dx\right)^{p-1}<\infty ,$$
where the supremum is taken over all intervals $I$ in $\mathbb{R}$.\\
We say that the function $w$ is an $A_\infty$ weight if there exist $\delta >0$ and $\epsilon >0$ such that given an interval $I$ in $\mathbb{R}$,  for any measurable $E\subset I$,
$$|E|<\delta\cdot |I|\implies w(E)<(1-\epsilon )\cdot w(I).$$
Here
$$w(E)=\int_Ew.$$
It is well known and easy to see that $w\in A_p\implies w\in A_\infty$ if $1<p<\infty$.\\
We say that $w\in A_1$ if  given an interval $I$ in $\mathbb{R}$ there is a positive constant $C$ such that
$$\frac{1}{|I|}\int_Iw(y)\, dy\leq Cw(x)$$
for a.e. $x\in I$.\\
\begin{lem}\label{vecapfsing}Let $A$ and $B$ be Banach spaces, and $T$ be a singular integral operator mapping $A$-valued functions into $B$-valued functions with kernel $K\in D_r$, where $1<r<\infty$. Then, for all $1<\rho <\infty$, the weighted inequalities
$$\left\|\left(\sum_j\|Tf_j\|_B^{\rho}\right)^{1/\rho}\right\|_{L^p(w)}\leq C_{p,\rho}(w)\left\|\left(\sum_j\|f_j\|_A^{\rho}\right)^{1/{\rho}}\right\|_{L^p(w)}$$
hold if $w\in A_{p/r^\prime}$ and $r^\prime\leq p<\infty$, or if $w\in A_p^{r^\prime}$ and $1<p\leq r^\prime$.
Likewise, if $w(x)^{r^\prime}\in A_1$, then the weak type inequality

\begin{align*}
w\left(\left\{x:\left(\sum_j\|Tf_j(x)\|_B^{\rho}\right)^{1/\rho}>\lambda\right\}\right)
 \leq C_{\rho}(w)\frac{1}{\lambda}\int\left(\sum_j\|f_j(x)\|_A^{\rho}\right)^{1/\rho}w(x)\, dx
\end{align*}
holds.
\end{lem}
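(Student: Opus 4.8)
The plan is to deduce this from the Calderón--Zygmund theory for operator-valued kernels together with weighted sharp maximal function estimates, after collapsing the family $\{f_j\}$ into a single function taking values in an $\ell^\rho$-space.

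First I would reduce to a single operator. Put $\widetilde A=\ell^\rho(A)$ and $\widetilde B=\ell^\rho(B)$, and for $\vec f=\{f_j\}_j$ define $S\vec f=\{Tf_j\}_j$. Then pointwise $\|S\vec f(x)\|_{\widetilde B}=\big(\sum_j\|Tf_j(x)\|_B^\rho\big)^{1/\rho}$ and $\|\vec f(x)\|_{\widetilde A}=\big(\sum_j\|f_j(x)\|_A^\rho\big)^{1/\rho}$, so the claimed inequalities become exactly $\big\|\,\|S\vec f\|_{\widetilde B}\,\big\|_{L^p(w)}\le C\big\|\,\|\vec f\|_{\widetilde A}\,\big\|_{L^p(w)}$ and its weak-type analogue. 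The kernel of $S$ is the diagonal operator $\widetilde K(x)\colon\{a_j\}\mapsto\{K(x)a_j\}$, so $\|\widetilde K(x-y)-\widetilde K(x)\|_{\mathcal L(\widetilde A,\widetilde B)}=\|K(x-y)-K(x)\|_{\mathcal L(A,B)}$; hence $\widetilde K\in D_r$ with the same sequence $\{c_l\}$, and (by Hölder's inequality, which yields $D_r\subset D_1$) $\widetilde K$ satisfies the Hörmander condition. Since $T$ is bounded on $L^2_A$, the Benedek--Calderón--Panzone vector-valued extension theorem shows that $S$ is bounded on $L^2_{\widetilde A}$ and of weak type $(1,1)$. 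Thus it suffices to prove the weighted bounds for a single singular integral operator $S$ between $\widetilde A$- and $\widetilde B$-valued functions, bounded on $L^2$, whose kernel lies in $D_r$.

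The heart of the argument is a pointwise sharp maximal estimate. Write $M_qh=\big(M(|h|^q)\big)^{1/q}$ and, for $0<\delta<1$, $M^\#_\delta h=\big(M^\#(|h|^\delta)\big)^{1/\delta}$. I would establish
\[
M^\#_\delta\big(\|S\vec f\|_{\widetilde B}\big)(x)\ \le\ C\,M_{r'}\big(\|\vec f\|_{\widetilde A}\big)(x)\quad\text{for a.e.\ }x\in\mathbb{R}.
\]
This is the familiar Calderón--Zygmund computation: fix an interval $Q\ni x$, split $\vec f=\vec f\chi_{3Q}+\sum_{l\ge1}\vec f\chi_{S_l(Q)}$ into a local part and the dyadic annuli $S_l(Q)$ about $Q$, and subtract from $S(\vec f\chi_{(3Q)^c})$ its value at the center of $Q$. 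The local term is controlled on the average over $Q$ by the weak $(1,1)$ bound of $S$ together with Kolmogorov's inequality, giving a multiple of $|3Q|^{-1}\int_{3Q}\|\vec f\|_{\widetilde A}\le M_{r'}(\|\vec f\|_{\widetilde A})(x)$; the annular terms are estimated by the $D_r$ inequality for $\widetilde K$ followed by Hölder's inequality with exponent $r$, producing at most $c_l\big(|S_l(Q)|^{-1}\int_{S_l(Q)}\|\vec f\|_{\widetilde A}^{r'}\big)^{1/r'}\le c_l\,M_{r'}(\|\vec f\|_{\widetilde A})(x)$, and $\sum_l c_l<\infty$.

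To finish, for $w\in A_\infty$ and $g$ a priori in some $L^{p_0}(w)$ --- which is arranged here by first treating $\vec f$ bounded with compact support and invoking the unweighted $L^2$-bound, then passing to the limit --- the Fefferman--Stein inequality $\|g\|_{L^p(w)}\le C\|M^\#_\delta g\|_{L^p(w)}$ combined with the displayed estimate gives $\big\|\,\|S\vec f\|_{\widetilde B}\,\big\|_{L^p(w)}\le C\big\|M_{r'}(\|\vec f\|_{\widetilde A})\big\|_{L^p(w)}$. The weight hypotheses of the lemma are precisely the ones turning the right-hand side into $C\big\|\,\|\vec f\|_{\widetilde A}\,\big\|_{L^p(w)}$: for $r'\le p<\infty$ one uses that $M_{r'}$ is bounded on $L^p(w)$ when $w\in A_{p/r'}$ (the endpoint $p=r'$, where $A_{p/r'}=A_1$, being treated by the Coifman--Fefferman good-$\lambda$ variant rather than by $L^p$-boundedness of $M_{r'}$); the range $1<p\le r'$ with $w\in A_p^{r'}$ follows by duality, since the adjoint $S^*$ has kernel $K^*(x)=K(-x)^*$, again in $D_r$; and the weak-type estimate for $w^{r'}\in A_1$ comes from running the Calderón--Zygmund decomposition at height $\lambda$ directly and using the $A_1$ property of $w^{r'}$ where Lebesgue measure is used in the scalar weak $(1,1)$ argument. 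I expect the main obstacle to lie in this last paragraph --- matching the three weight classes $A_{p/r'}$, $A_p^{r'}$ and $w^{r'}\in A_1$ to their respective ranges and dealing with the endpoint $p=r'$ --- rather than in the singular-integral estimates, which are routine once $\widetilde K\in D_r$ is in hand.
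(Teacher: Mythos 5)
Your proposal and the paper diverge in a basic way: the paper does not prove this lemma at all, but simply quotes it as Theorem 1.6 of Part II of Rubio de Francia, Ruiz and Torrea, \emph{Calder\'on--Zygmund theory for operator-valued kernels}. What you have written is, in outline, a reconstruction of the proof given in that reference: the reduction of the family $\{f_j\}$ to a single operator between $\ell^{\rho}(A)$- and $\ell^{\rho}(B)$-valued functions via the diagonal kernel (which indeed has the same $\mathcal{L}(A,B)$-operator norm, hence the same $D_r$ constants), the pointwise sharp maximal estimate $M^{\#}_{\delta}\bigl(\|S\vec f\|\bigr)\leq C\,M_{r^{\prime}}\bigl(\|\vec f\|\bigr)$ coming from the $D_r$ condition plus Kolmogorov's inequality on the local part, and then Fefferman--Stein together with the weighted boundedness of $M_{r^{\prime}}$, with duality handling the range $1<p\leq r^{\prime}$ and a direct Calder\'on--Zygmund decomposition handling the weak-type bound for $w^{r^{\prime}}\in A_1$. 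This is the right architecture, and it buys something the paper does not offer, namely an actual argument rather than a citation. Two points deserve tightening if you intend this as a complete proof: first, the $L^2_{\ell^{\rho}(A)}$ boundedness of the diagonal operator $S$ is not immediate from the $L^2_A$ bound for $T$; the clean route is to note that $S$ is trivially bounded on $L^{\rho}_{\ell^{\rho}(A)}$ by Fubini (using the $L^{\rho}_A$ bound for $T$, itself obtained from Benedek--Calder\'on--Panzone), and then apply the H\"ormander-condition machinery to $S$ at that exponent. Second, the endpoint $p=r^{\prime}$ (where $M_{r^{\prime}}$ is not bounded on $L^{r^{\prime}}(w)$ for $w\in A_1$) and the weak-type inequality for $w^{r^{\prime}}\in A_1$ are exactly the places where you wave at ``good-$\lambda$'' and ``run the Calder\'on--Zygmund decomposition directly''; these are the genuinely delicate steps in the cited theorem, and as written they are asserted rather than proved. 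As a blind reconstruction of the standard proof your sketch is sound; as a self-contained argument it still leans on those two unproved steps, for which the paper's citation remains the actual source.
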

\begin{proof} This is Theorem 1.6 of Part II in J. L. Rubio de Francia \textit{et al}~\cite{jlrdffjrjlt}.
\end{proof}
Our next result is the following:
\begin{thm}\label{vecapfvar}Let $2\leq s<\infty$. Then, for all $1<\rho <\infty$, the weighted inequalities
$$\left\|\left(\sum_j(\mathcal{V}_sf_j)^{\rho}\right)^{1/\rho}\right\|_{L^p(w)}\leq C_{p,\rho}(w)\left\|\left(\sum_j|f_j|^{\rho}\right)^{1/{\rho}}\right\|_{L^p(w)}$$
hold if $w\in A_{p/r^\prime}$ and $r^\prime\leq p<\infty$, or if $w\in A_p^{r^\prime}$ and $1<p\leq r^\prime$.
Likewise, if $w(x)^{r^\prime}\in A_1$, then the weak type inequality
\begin{align*}
w\left(\left\{x:\left(\sum_j(\mathcal{V}_sf_j(x))^{\rho}\right)^{1/\rho}>\lambda\right\}\right)
 \leq C_{\rho}(w)\frac{1}{\lambda}\int\left(\sum_j|f_j(x)|^{\rho}\right)^{1/\rho}w(x)\, dx
\end{align*}
holds.
\end{thm}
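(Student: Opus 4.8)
The plan is to recognize Theorem~\ref{vecapfvar} as nothing more than the specialization of Lemma~\ref{vecapfsing} to the particular singular integral operator constructed in the earlier part of the paper. Recall from Remark~\ref{drrem} that $Tf=\{(\phi_k-\phi_{k-1})\ast f\}_{k\in\mathbb{Z}^+}$ is an $\ell^s$-valued singular integral operator of convolution type whose kernel $K(x)=\{\phi_k(x)-\phi_{k-1}(x)\}_{k\in\mathbb{Z}^+}$ satisfies the $D_r$ condition for every $1\le r<\infty$ (this is the content of Lemma~\ref{drlem}); in particular it satisfies $D_r$ for some fixed $1<r<\infty$, which is exactly the hypothesis required by Lemma~\ref{vecapfsing}. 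Moreover $T$ maps scalar-valued (i.e. $A=\mathbb{R}$-valued) functions into $\ell^s(\mathbb{Z}^+)$-valued functions, and the fundamental identity $\|K\ast f(x)\|_{\ell^s(\mathbb{Z}^+)}=\mathcal{V}_sf(x)$ lets us translate the $B$-norm $\|Tf\|_B$ appearing in Lemma~\ref{vecapfsing} directly into $\mathcal{V}_sf$.

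First I would fix $r$ with $1<r<\infty$ so that $K\in D_r$ (any such $r$ works by Lemma~\ref{drlem}); this $r$ is the one that appears in the statement through $r'$. Then I would apply Lemma~\ref{vecapfsing} with $A=\mathbb{R}$, $B=\ell^s(\mathbb{Z}^+)$, and the operator $T$ above, to the given family $\{f_j\}$. The conclusion of that lemma reads
$$\left\|\left(\sum_j\|Tf_j\|_{\ell^s(\mathbb{Z}^+)}^{\rho}\right)^{1/\rho}\right\|_{L^p(w)}\leq C_{p,\rho}(w)\left\|\left(\sum_j|f_j|^{\rho}\right)^{1/{\rho}}\right\|_{L^p(w)},$$
valid under the stated conditions on $w$ and $p$ (namely $w\in A_{p/r'}$ with $r'\le p<\infty$, or $w\in A_p^{r'}$ with $1<p\le r'$), and likewise gives the weak-type inequality when $w^{r'}\in A_1$. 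Substituting $\|Tf_j(x)\|_{\ell^s(\mathbb{Z}^+)}=\|K\ast f_j(x)\|_{\ell^s(\mathbb{Z}^+)}=\mathcal{V}_sf_j(x)$ term by term turns both displays into precisely the strong-type and weak-type inequalities asserted in Theorem~\ref{vecapfvar}, and we are done.

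There is essentially no obstacle here: the theorem is a corollary, and all the analytic work — verifying the $D_r$ condition (Lemma~\ref{drlem}), the $L^2$ boundedness underlying the singular-integral structure (Lemma~\ref{strl2}), and the abstract weighted vector-valued theory (Lemma~\ref{vecapfsing}, quoted from Rubio de Francia et al.) — has already been assembled. The only point deserving a word of care is the identification $A=\mathbb{R}$: in Lemma~\ref{vecapfsing} the input functions $f_j$ are $A$-valued, and here they are scalar, so $\|f_j\|_A=|f_j|$, which is why the right-hand side of Theorem~\ref{vecapfvar} has $|f_j|^\rho$ rather than a norm. One should also note that the constant $C_{p,\rho}(w)$, and the admissible ranges of $p$ and classes of weights $w$, are inherited verbatim from Lemma~\ref{vecapfsing} once $r$ is fixed as above; no optimization over $r$ is attempted.
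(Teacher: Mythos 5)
Your proposal is correct and follows essentially the same route as the paper: both deduce the theorem by applying Lemma~\ref{vecapfsing} with $A=\mathbb{R}$, $B=\ell^s(\mathbb{Z}^+)$ to the operator $T$ of Remark~\ref{drrem}, whose kernel satisfies $D_r$ by Lemma~\ref{drlem}, and then using the identity $\|K\ast f(x)\|_{\ell^s(\mathbb{Z}^+)}=\mathcal{V}_sf(x)$. Your explicit remark that one fixes a single $1<r<\infty$ (so that $r'$ in the statement refers to that choice) is a small clarification the paper leaves implicit, but the argument is the same.
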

\begin{proof}We have proved for $2\leq s<\infty$ that  $Tf=\{(\phi_k-\phi_{k-1})\ast f \}_{k\in\mathbb{Z}^+}$  is an $\ell^s$-valued singular integral operator of convolution type and its kernel operator $K(x)=\{\phi_k(x)-\phi_{k-1}(x)\}_{k\in\mathbb{Z}^+}$ satisfies $D_r$ condition for $1\leq r<\infty$. Thus the result follows from  Lemma~\ref{vecapfsing} and the fact that  $\|K\ast f(x)\|_{\ell^s(\mathbb{Z}^+)}=\mathcal{V}_sf(x)$.
\end{proof}
In particular we have the following corollary:
\begin{cor}Let $2\leq s<\infty$. Then the weighted inequalities
$$\left\|\mathcal{V}_sf\right\|_{L^p(w)}\leq C_{p,\rho}(w)\left\|f \right\|_{L^p(w)}$$
hold if $w\in A_{p/r^\prime}$ and $r^\prime\leq p<\infty$, or if $w\in A_p^{r^\prime}$ and $1<p\leq r^\prime$. Likewise, if $w(x)^{r^\prime}\in A_1$, then the weak type inequality
\begin{align*}
w\left(\left\{x:\mathcal{V}_sf(x)>\lambda\right\}\right)\leq C_{\rho}(w)\frac{1}{\lambda}\int |f(x)| w(x)\, dx
\end{align*}
holds.
\end{cor}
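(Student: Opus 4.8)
The plan is to obtain this corollary as the scalar specialization of Theorem~\ref{vecapfvar}, collapsing the $\ell^\rho$-sum to a single term. Fix $1<\rho<\infty$ and a weight $w$ satisfying one of the stated hypotheses. Given $f\in L^p(w)$, apply Theorem~\ref{vecapfvar} to the sequence $(f_j)_{j\geq 1}$ with $f_1=f$ and $f_j=0$ for all $j\geq 2$. Recalling that $\mathcal{V}_sf(x)=\|K\ast f(x)\|_{\ell^s(\mathbb{Z}^+)}$ with $K\ast f$ linear in $f$, we have $\mathcal{V}_s f_j\equiv 0$ for $j\geq 2$, so
$$\left\|\left(\sum_j(\mathcal{V}_sf_j)^{\rho}\right)^{1/\rho}\right\|_{L^p(w)}=\|\mathcal{V}_sf\|_{L^p(w)},\qquad \left\|\left(\sum_j|f_j|^{\rho}\right)^{1/\rho}\right\|_{L^p(w)}=\|f\|_{L^p(w)}.$$
Substituting these two identities into the strong-type inequality of Theorem~\ref{vecapfvar} yields
$$\|\mathcal{V}_sf\|_{L^p(w)}\leq C_{p,\rho}(w)\|f\|_{L^p(w)}$$
with the same constant and under the same conditions on $w$, namely $w\in A_{p/r'}$ with $r'\leq p<\infty$, or $w\in A_p^{r'}$ with $1<p\leq r'$.

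For the weak-type assertion I would proceed identically. With the same choice $f_1=f$, $f_j=0$ for $j\geq 2$, the level set in Theorem~\ref{vecapfvar} simplifies, $\left\{x:\left(\sum_j(\mathcal{V}_sf_j(x))^{\rho}\right)^{1/\rho}>\lambda\right\}=\{x:\mathcal{V}_sf(x)>\lambda\}$, and $\int\left(\sum_j|f_j(x)|^{\rho}\right)^{1/\rho}w(x)\,dx=\int|f(x)|\,w(x)\,dx$. Hence, whenever $w(x)^{r'}\in A_1$, the weak-type inequality of Theorem~\ref{vecapfvar} becomes exactly
$$w\left(\left\{x:\mathcal{V}_sf(x)>\lambda\right\}\right)\leq C_{\rho}(w)\frac{1}{\lambda}\int|f(x)|\,w(x)\,dx,$$
which is the stated bound.

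There is essentially no obstacle: the content is entirely carried by Theorem~\ref{vecapfvar}, and the only point to verify is that it may be applied to a sequence with a single nonzero entry. This is immediate, since that theorem is stated for arbitrary sequences $(f_j)$ in the relevant space and the constants $C_{p,\rho}(w)$ and $C_\rho(w)$ are independent of the number of nonzero terms; in fact one could equivalently apply Lemma~\ref{vecapfsing} directly with $A=\mathbb{R}$, $B=\ell^s(\mathbb{Z}^+)$, and a one-term sequence. I would therefore present the corollary with just the short collapse argument above.
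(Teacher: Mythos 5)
Your proposal is correct and matches the paper's intent: the corollary is presented simply as the scalar specialization (``in particular'') of Theorem~\ref{vecapfvar}, which is exactly your collapse to the one-term sequence $f_1=f$, $f_j=0$ for $j\geq 2$. Nothing further is needed.
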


\end{document}